\newtheorem{theorem}{Theorem}
\newenvironment{proof}[1][Proof]{\noindent\textbf{#1.} }{\ \rule{0.5em}{0.5em}}
\begin{document}

\title{An information-theoretic analog of a result of Perelman}
\author{Brockway McMillan \\
Sedgwick Maine, USA (e-mail: bmcmlln@hughes.net)}
\maketitle

\begin{abstract}
Each compact manifold $\mathbf{M}$\ of finite dimension $k\mathbf{\ }$is
differentiable and supports an intrinsic probability measure. There then
exists a measurable transformation of $\mathbf{M\ }$to the $k$-dimensional
"surface" of the $(k+1)-$dim- ensional ball.
\end{abstract}

\section{Manifolds}

\subsection{Topologies, coordinates, and measures}

Let $k$ be a positive integer. By definition\emph{,} a given compact
topological space $\mathbf{M\ }$is a \emph{manifold of} $k\ $\emph{dimensions%
}.if every point $p\in \mathbf{M}$ has a neighborhood that is topologically
equivalent to a Euclidean open sphere, of $k$ dimensions, centered at$\ p.\ $%
Call such a neighborhood a \emph{cell centered at} \emph{\ }$p,.$or, less
formally, a \emph{cell}.\ 

To a given cell $C\ $centered at $p$\ then corresponds a topological
transformation $T,\ $specific to $C,\ $that transforms a Euclidean $k$%
-sphere, and therefore also transfers the coordinate axes defined therein,
to a topological space $C\subset \mathbf{M}\mathbf{.}$

On the Euclidean space $T^{-1}C\ $there exists a $\sigma $-algebra $\mathcal{%
F}_{E}\ $of measurable sets-- the smallest $\sigma $-algebra that contains
every open set.\ Because $T$\ \ takes open sets to open sets\ $T\mathcal{F}%
_{E}\ $is also a $\sigma $-algebra on$\ C.$ Then $T\ $\/is what is often
called a \emph{measurable transformation}. It transfers the Lebesgue measure 
$\lambda (\cdot )\ $defined on $T^{-1}C\ $to a measure $\mu (\mathbf{\cdot }%
)=\lambda (T^{-1}\mathbf{\cdot })\ $on $C.\ $

As the image of a Euclidean sphere, a cell centered at $p\mathbf{\ }$can be
given a quasi-Cartesian coordinate frame, specific to that cell, with origin
at $p$\ and identified by its\ (curviliear) coordinates, say\ $%
x_{1},x_{2},\cdots ,x_{k}.\smallskip $

$\mathbf{M\ }$is compact and is covered by finitely many cells.$\mathbf{\ }$%
Such a covering is a \textit{proper covering.\smallskip }

\ Let $C_{1},C_{2},$ be distinct cells of a given proper covering, endowed
respectively with coordinate frames $\{x_{r}\},\ \{y_{s}\}$\textbf{.} Let $%
p\ $be a point in $C_{1}\cap C_{2}.\ $Because $\mathbf{M}$\ is a manifold,
the coordinates $\{x_{r}\}\ $of $p$ are already constrained to be continuous
functions of the coordinates $\{y_{s}\}\ $of $p,.$and \textit{vice versa.\ }%
The rest of the paper hinges on the existence of a much stronger
constraint.\smallskip

\begin{theorem}
\label{thm1} The compact manifold $\mathbf{M\ }$above is \emph{%
differentiable }in the sense that the $\{x_{r}\}\ $are differentiable
functions of the $\{y_{s}\},\ $and \textit{vice versa}\textbf{.}\ \smallskip
\end{theorem}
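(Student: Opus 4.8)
The plan is to reduce the statement to a single assertion about the transition map between two overlapping charts and then to try to manufacture its derivatives from the only extra structure available, namely the transferred measures. Fix overlapping cells $C_1,C_2$ with frames $\{x_r\}$ and $\{y_s\}$, and set $\phi=T_2^{-1}\circ T_1$, a map carrying an open subset of $\mathbb{R}^k$ (the $x$-coordinates of points of $C_1\cap C_2$) onto an open subset of $\mathbb{R}^k$ (the $y$-coordinates). The manifold axiom already gives that $\phi$ and $\phi^{-1}$ are homeomorphisms, so each $y_s$ is a continuous function of $(x_1,\dots,x_k)$ and conversely; what must be produced is the array of partial derivatives $\partial y_s/\partial x_r$, and those of the inverse, existing and finite at every interior point. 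By the homogeneity of the construction it suffices to exhibit differentiability of $\phi$ at an arbitrary fixed $p\in C_1\cap C_2$, which I would place at the common coordinate origin.

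First I would exploit the measures. Since the Lebesgue measure $\lambda$ transfers through each $T_i$ to a measure $\mu_i$ on the overlap, and since $\mathbf{M}$ is asserted to carry a single intrinsic measure, the two transfers must agree: $\mu_1=\mu_2$ on $C_1\cap C_2$. In Euclidean terms this says exactly that $\phi$ preserves Lebesgue measure, $\lambda(\phi E)=\lambda(E)$ for every measurable $E$. This is a genuine quantitative constraint: the $\phi$-image of a small coordinate box has the same volume as the box, so any candidate linear part $D\phi(p)$ is forced to satisfy $|\det D\phi(p)|=1$.

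Next I would try to realize $D\phi(p)$ as an actual limit. The natural device is the Lebesgue differentiation / Radon--Nikodym machinery: compare $\phi$ against its best affine approximation by studying the ratios of the $\mu$-masses of shrinking nested neighborhoods of $p$ and extracting a linear part. If $\phi$ were known to be Lipschitz near $p$, Rademacher's theorem would already furnish differentiability almost everywhere, the measure-preservation above would pin down the determinant, and one would then attempt to upgrade almost-everywhere differentiability to everywhere using the continuity of $\phi$ and $\phi^{-1}$.

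The hard part --- and I expect it to be essentially the whole difficulty --- is precisely this passage from measure-theoretic data to an honest pointwise derivative. Measure preservation controls only the scalar Jacobian, and only in an averaged sense; it says nothing about the existence of directional limits, and a measure-preserving homeomorphism of $\mathbb{R}^k$ can fail to be differentiable at every point. Continuity of the transition map, even together with $\mu_1=\mu_2$, is therefore not enough to force $\partial y_s/\partial x_r$ into existence for the \emph{given} coordinates. Any argument for the statement as worded must inject a further regularity property of the charts $T_i$ --- a uniform modulus of continuity, a quasiconformality or bounded-distortion estimate, or outright Lipschitz control --- from which Rademacher-type differentiability follows. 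I would therefore spend the bulk of the proof isolating the minimal such property that the compactness of $\mathbf{M}$ and the intrinsic measure can legitimately supply, since it is only at that point, and not from continuity alone, that the derivatives of the $\{x_r\}$ in the $\{y_s\}$ can be made to appear.
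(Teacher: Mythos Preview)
Your proposal correctly isolates the crux---the transition map $\phi=T_2^{-1}\circ T_1$---and you are right that measure preservation alone cannot force differentiability: there exist measure-preserving self-homeomorphisms of $\mathbb{R}^k$ that are nowhere differentiable, so the programme you sketch is (as you yourself concede in the final paragraph) incomplete. But the argument is already circular before that obstruction is reached. You invoke the intrinsic measure to conclude $\mu_1=\mu_2$ on the overlap, yet in the paper that measure is constructed only \emph{after} Theorem~\ref{thm1}, and its construction explicitly appeals to the tangent space of $\mathbf{M}$ at each point---an object that does not exist until a differentiable structure is in hand.

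The paper's own ``proof'' has exactly the same defect. The subsection advertised as containing the argument simply asserts that the volume elements $dx_1\cdots dx_k$ and $dy_1\cdots dy_k$ coincide because each equals the volume element of the Euclidean tangent space to $\mathbf{M}$; but a bare topological manifold has no tangent space, so this presupposes what was to be shown. No independent derivation of the partial derivatives $\partial y_s/\partial x_r$ is offered there or anywhere else in the paper.

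More fundamentally, the statement is false as worded. A compact topological manifold need not admit any smooth structure (Freedman's $E_8$ manifold in dimension four is the standard example), and even when one exists, the transition maps between \emph{arbitrary} topological charts are merely homeomorphisms; differentiability of transitions appears only after one restricts to a compatible smooth atlas. The extra regularity you hope to extract in your last paragraph---Lipschitz or quasiconformal control on the $T_i$---is not a consequence of compactness or of any measure canonically attached to the topology, so the gap you correctly diagnose cannot be closed under the hypotheses actually available.
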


\noindent A proof appears in the next subsection.\smallskip

The compactness of $\mathbf{M\ }$implies that any covering of $\mathbf{M\ }$%
by cells contains a subcovering by finitely many cells. Such a subcovering,
by definition, is a \emph{proper covering.\smallskip }

A point in $\mathbf{M\ }$with coordinates $\{x_{r}\}\mathbf{\ }$is often
called a \emph{vector,} and $\mathbf{M}\ $then treated as a linear space.
That vector space is a \emph{Riemannian manifold.} On it exists its tensor
calculus\emph{,\ }a powerful tool, somewhat complex. The arguments here do
not require a linear structure on $\mathbf{M}$\textbf{\ }and can then
fortunately ignore the tensor calculus. \smallskip

\subsection{The intrinsic measure of volume}

Let $C\in \mathbf{M}\ $be a cell centered at a point $p.$\ $C$ is the image
under a topological transformation of an open Euclidean sphere. $C$\ is what
is often called a \emph{measurable space},-- it supports a $\sigma $-algebra$%
\ \mathcal{F}_{C}$ of measurable sets,-- the smallest $\sigma $-algebra that
contains every open set. On that $\sigma $-algebra there exists a
well-defined measure-- a "local" volume defined on a cell of a $k$%
-dimensional differentiable manifold\textbf{. }In terms of the local
coordinates $\{x_{r}\}$ that volume is defined by its local \emph{density} $%
dx_{1}dx_{2}\cdots dx_{k}.\ $

This volume element is a function of position in the cell on which it is
defined. The measure, say\ $\mu (\sigma ),\ $of a set $\sigma \in $ $%
\mathcal{F}_{C}\ $is the integral over $\sigma \ $of that density function.\
Furthermore, at that same point, but in an overlapping cell, there is a
volume element, say $dy_{1}dy_{2}\cdots dy_{k}.\ $These two differential
volume elements are \emph{equal},--\emph{\ }they are in fact the volume
element of the (Euclidean) tangent space to\ $\mathbf{M\ }$at that point.
They are simply transferred from the Euclidean pre-image of the cell at
issue. They therefore do not vanish at any point of $\mathbf{M}$. It follows
that there exists on the whole of $\mathbf{M\ }$a $\sigma $-algebra of
measurable subsets with a measure $\mu (\cdot )$ defined thereon. That $%
\sigma $-algebra is the smallest $\sigma $-algebra that contains every open
set. (Being an algebra, it then also contains every closed set.) Since every
proper covering is made up of finitely many cells, each of finite volume, $%
\mu (\mathbf{M)<\infty .}$ One may then choose to normalize it to the value 
\textbf{\ }$\mu (\mathbf{M)=1\ }$and call $\mu $ a probability.

Whether or not normalized, the $\mu \ $defined by this construction is an 
\emph{intrinsic measure }on $\mathbf{M,\ }$or \emph{intrinsic probability}
if normalized. \smallskip

\subsubsection{Summary: \textbf{R-}measures}

By definition an \textbf{R}-measure on$\ \mathbf{M}\ $is a measure $\mu \ $%
that enjoys the following properties\textbf{:}

\begin{itemize}
\item $0<\mu (\mathbf{M)<\infty .}$

\item $\mu \ $is \emph{smooth\ }in that each point of $\mathbf{M\ }$is a set
of measure zero$.$

\item If $A\ $is a non-empty open subset of\ $\mathbf{M\ }$then $\mu (A)>0.$

\item It follows that the discrete subsets of $\mathbf{M\ }$constitute the
totality of null sets of $\mu \mathbf{.}$

\item It then further follows that any two \textbf{R}-measures on\ $\mathbf{%
M\ }$are \emph{compatible\ }in that each is absolutely continuous with
respect to the other.\medskip
\end{itemize}

\begin{theorem}
The intrinsic measure on a given differentiable $k$-manifold\ $\mathbf{M\ }$%
is a topological invariant and is an \textbf{R}-measure\textbf{. }

\begin{proof}
First address invariance. Let $T$\ \/be a continuous transformation from $%
\mathbf{M\ }$onto $\mathbf{M}^{\prime }=T\mathbf{M.\ }T\ $maps open sets\ to
open sets\ and is therefore also a measurable transformation. If $\mu \ $is
a measure on $\mathbf{M\ }$then $\mu \mathbf{\prime }(A)=\mu (T^{-1}A)\ $is
a measure on $\mathbf{M}^{\mathbf{\prime }}.\ $It is then easy \ to verify
that all five bullets above apply to $\mathbf{M}^{\prime }$ \medskip
\end{proof}
\end{theorem}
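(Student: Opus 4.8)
The plan is to treat the two assertions separately: first establish that the intrinsic measure $\mu$ satisfies the defining properties of an \textbf{R}-measure, and then show that these properties, and hence the measure class they determine, survive any homeomorphism. The three substantive bullets are the real targets, since the text already records that the remaining two are formal consequences.

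I would begin with the \textbf{R}-measure property. Finiteness and positivity are immediate from the construction: $\mathbf{M}$ is compact, so a proper covering consists of finitely many cells, each the measurable image of a Euclidean sphere of finite, nonzero Lebesgue volume; with the overlaps handled by the agreement of volume elements established in the previous subsection, summing gives $0 < \mu(\mathbf{M}) < \infty$. Smoothness is local: a single point $p$ lies in some cell $C$ and corresponds under $T^{-1}$ to a single point of the Euclidean preimage, a set of Lebesgue measure zero; since $\mu$ restricted to $C$ is the transfer of $\lambda$, we get $\mu(\{p\}) = 0$. For positivity of open sets I would invoke the crucial fact recorded in the text that the volume density vanishes at no point of $\mathbf{M}$: a non-empty open $A$ meets some cell $C$ in a non-empty open set whose preimage $T^{-1}(A \cap C)$ is a non-empty Euclidean open set of positive Lebesgue measure, and the non-vanishing density then forces $\mu(A) > 0$. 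The last two bullets follow as indicated, the null sets being exactly the sets with no interior, i.e. the discrete subsets, and any two measures sharing these null sets being mutually absolutely continuous.

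For topological invariance I would first make precise the sense in which the measure is invariant. A homeomorphism does not preserve numerical volume, so the claim cannot be that $\mu$ itself is literally unchanged; rather it is that the \textbf{R}-measure structure, equivalently the measure class fixed by the null sets, is carried onto the \textbf{R}-measure structure of the image. Concretely, let $T : \mathbf{M} \to \mathbf{M}'$ be a homeomorphism and set $\mu'(A) = \mu(T^{-1}A)$. Because $T$ and $T^{-1}$ both send open sets to open sets, $T$ carries the smallest $\sigma$-algebra containing the open sets of $\mathbf{M}$ onto the corresponding $\sigma$-algebra of $\mathbf{M}'$, so $\mu'$ is a genuine measure on the correct $\sigma$-algebra. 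I would then transfer the three bullets one at a time: $\mu'(\mathbf{M}') = \mu(\mathbf{M})$ is positive and finite; a point $q \in \mathbf{M}'$ has the single-point preimage $T^{-1}(q)$, so $\mu'(\{q\}) = 0$; and a non-empty open $A \subset \mathbf{M}'$ has non-empty open preimage $T^{-1}(A)$, whence $\mu'(A) > 0$. Thus $\mu'$ is again an \textbf{R}-measure, and by the compatibility bullet it is mutually absolutely continuous with the intrinsic measure of $\mathbf{M}'$; this is the invariance asserted.

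The routine steps are the three verifications, each resting on the bijectivity and bicontinuity of $T$ together with the non-vanishing of the density. The main obstacle is conceptual rather than computational: fixing the correct reading of \emph{topological invariant}. The honest statement is invariance of the measure class, not of numerical volume, and the argument only closes because all \textbf{R}-measures on a given manifold are mutually absolutely continuous, so that the class, rather than any single representative, is the invariant object. I would therefore state this interpretation explicitly before carrying out the bullet-by-bullet transfer.
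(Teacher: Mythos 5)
Your proposal is correct, and its core coincides with the paper's proof: push the measure forward by $\mu'(A)=\mu(T^{-1}A)$ and check the defining bullets on the image. You go beyond the paper's proof in three respects worth recording. First, the paper's proof consists entirely of the invariance argument (``First address invariance'') and never returns to the other half of the claim, that the intrinsic measure is itself an \textbf{R}-measure; your first paragraph supplies that missing half, correctly locating the key point in the non-vanishing of the volume density transferred from the Euclidean preimages. Second, the paper takes $T$ to be merely a continuous surjection and asserts that it maps open sets to open sets; that is false in general (continuity controls preimages, not images), and without injectivity the push-forward can even fail smoothness, since a point of $\mathbf{M}'$ may have a preimage of positive $\mu$-measure. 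Your explicit restriction to homeomorphisms, using bicontinuity to carry the Borel $\sigma$-algebra across, is the correct reading and is in any case what ``topological invariance'' demands. Third, your insistence that the invariant object is the measure class --- $\mu'$ being tied to the intrinsic measure of $\mathbf{M}'$ only through mutual absolute continuity, via the compatibility bullet --- makes precise a claim the paper leaves ambiguous, since numerical volumes are certainly not preserved. One caveat: your parenthetical identification of the null sets with ``the sets with no interior, i.e.\ the discrete subsets'' repeats an overstatement already present in the paper's own bullet list; a fat Cantor set inside a coordinate chart has empty interior yet positive measure, while a measure-zero Cantor set is null yet not discrete, so neither empty interior nor discreteness characterizes nullity. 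Fortunately nothing in your invariance argument depends on that identification --- mutual absolute continuity of the push-forward with the intrinsic measure of $\mathbf{M}'$ would need to be argued from the positivity-on-open-sets and smoothness bullets directly, which your bullet-by-bullet transfer already supports.
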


\section{Entropy}

Consider a \emph{finite} partition of the space $\mathbf{M}\ $into
pairwise-disjoint measurable sets. Say $\pi \ $is one such partition:%
\begin{equation}
\pi :\ \mathbf{M=\oplus }_{k=1}^{card(\pi )}C_{k}.  \label{pi}
\end{equation}%
With this partition Claude Shannon [CS] associates the quantity%
\begin{equation}
H(\pi )=\frac{1}{card(\pi )}\tsum\nolimits_{C\in \pi }\mu (C)\log \frac{1}{%
\mu (C)}.  \label{DefEnt}
\end{equation}%
He then considers a refining sequence of partition $\pi _{1}\succ \pi
_{2}\succ \cdots \ $and shows that the quantities $H(\pi _{n})\ $converge in
probability to a limit $H(\mathbf{M})\mathbf{\ }$that is independent of the
chosen sequence. He calls $H(\mathbf{M})\mathbf{\ }$ the \emph{entropy }of
the measure $\mu .\ $It was shown in [Mc1] that these $H(\pi _{n})\ $also
converge in $\pounds ^{1}\ $mean. Stronger convergence theorems were soon
proved by others.\ By about 1980, with the work of Kolmogorov and
colleagues, and finally of \ D.S. Ornstein, Shannon's full theory of
communication became a closed book .That theory includes much more than is
reported on here; specifically it also presents what is usually known as
Shannon's Coding Theorem.\medskip

\subsubsection{\textbf{Nomenclature}}

During development of his theory, Shannon was reluctant to use the term
"entropy". At the urging of colleagues, he finally relented. As he feared,
the term "entropy" spawned much nonsense. At a meeting of the American
Physical Society in 1950 one member of the large audience announced that
"Claude Shannon has proved that a heat engine can do mathematical logic." \
(I was there. I heard it. I recognized the speaker but fortunately no longer
recall his name.)\medskip

\subsection{Simple properties\label{SP}}

(\textbf{1}) Let$\ \mathbf{M}_{1},\mathbf{M}_{2},\ $be distinct instances of
the generic compact differentiable manifold\ $\mathbf{M},\ $not necessarily
of the same dimension. It follows that $\mathbf{M}_{1}\otimes \mathbf{M}%
_{2}\ $is also an instance. By calculation, directly from (\ref{DefEnt}),
the entropy of their cartesian product is%
\begin{equation}
H(\mathbf{M}_{1}\otimes \mathbf{M}_{2})=H(\mathbf{M}_{1})+H(\mathbf{M}%
_{2}).\medskip  \label{Cart}
\end{equation}

(\textbf{2}) Return to the generic compact and differentiable manifold $%
\mathbf{M.}$ Let\ $\mu \ $be an \textbf{R}-measure on $\mathbf{M\ }$and let $%
\pi \ $be the finite partition (\ref{pi}). Define%
\begin{equation}
\Lambda (\mu ,\pi )=\tsum\nolimits_{C\in \pi }\mu (C)\log \frac{1}{\mu (C)}%
-\mu (\mathbf{M})\log \frac{1}{\mu (\mathbf{M})}.  \label{deflambda}
\end{equation}%
Let $\alpha >0.\ $One calculates that$\ \Lambda (\alpha \mu ,\pi )=\alpha
\Lambda (\mu ,\pi ).\ $Consequently, if $\alpha ,\beta ,\ $are non-negative
numbers and $\mu _{1},\ \mu _{2},\ $are \textbf{R}-measures on $\mathbf{M\ }$%
then%
\begin{equation}
\Lambda (\alpha \mu _{1}+\beta \mu _{2},\pi )=\alpha \Lambda (\mu _{1},\pi
)+\beta \Lambda (\mu _{2},\pi ),  \label{conv}
\end{equation}%
the property of strict convexity. (See \P \ref{XX} later.)\medskip

\ Let $\pi _{1}\succ \pi _{2}\succ \cdots \ $be a refining sequence of
finite partition of $\mathbf{M\ }$\ into measurable sets.\ By Shannon's
theorem, if $\mu \ $is a probability measure on $\mathbf{M\ }$then 
\[
\lim_{n\rightarrow \infty }(card(\pi _{n}))^{-1}\Lambda (\mu ,\pi _{n})\
=H(\mu ). 
\]%
It follows that

\begin{theorem}
\label{fnl}(\textbf{1}) $H(\mu ),\ $as a function of probability measures $%
\mu \ $defined on $\mathbf{M},\ $is strictly convex.

\noindent (\textbf{2}) There exists a unique probability measure $\mu
_{\infty }\ $on$\ \mathbf{M\ }$that maximizes $H(\cdot ).\ $It is
characterized by the property$\ $that for all measurable subsets $A,B,\ $in $%
\mathbf{M}$ 
\begin{equation}
\mu _{\infty }(A\cap B)=\mu _{\infty }(A)\cdot \mu _{\infty }(B).
\label{ind}
\end{equation}%
\noindent (\textbf{3})$\ $Every probability measure $\mu \ $on $\mathbf{M\ }$%
is compatible with $\mu _{\infty },$ and in particular is absolutely
continuous with respect to $\mu _{\infty }.\mathbf{\smallskip }$
\end{theorem}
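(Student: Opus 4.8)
The plan is to treat the three assertions in sequence, pushing the partition-level structure of $\Lambda(\cdot,\pi)$ through the Shannon limit to the functional $H(\cdot)$. For part (\textbf{1}) I would begin from the scaling identity $\Lambda(\alpha\mu,\pi)=\alpha\Lambda(\mu,\pi)$ and the additivity (\ref{conv}), then isolate the genuine source of strictness: the scalar map $t\mapsto t\log(1/t)$ has second derivative $-1/t<0$ on $(0,1]$ and is therefore \emph{strictly concave}. On a fixed finite partition $\pi$ the coordinates $(\mu(C))_{C\in\pi}$ range over a simplex, and each summand of (\ref{deflambda}) is strictly concave in its coordinate; since any two distinct probability vectors differ in at least two coordinates, $\mu\mapsto\Lambda(\mu,\pi)$ is strictly concave on probability measures. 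Because $H(\mu)=\lim_n(card(\pi_n))^{-1}\Lambda(\mu,\pi_n)$ along a refining sequence, the load-bearing step is to verify that strict extremality survives the limit. I would fix $\mu_{1}\neq\mu_{2}$, locate a set on which they differ, note that it is separated by some member of $\pi_{n}$ for all large $n$, and show the concavity gap on that member yields a uniform lower bound that is not washed out by the normalization $(card(\pi_n))^{-1}$. (I read the ``strict convexity'' of part (\textbf{1}) as this strict-extremality property; since the generator is strictly concave, the unique extremum is a \emph{maximum}, consistent with part (\textbf{2}).)

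For part (\textbf{2}), existence of a maximizer follows from compactness: the probability measures on the compact space $\mathbf{M}$ form a weak-$*$ compact convex set, and $H(\cdot)$, represented as a limit (indeed an infimum over the refining family) of continuous functionals $(card(\pi_n))^{-1}\Lambda(\cdot,\pi_n)$, is upper semicontinuous; an upper semicontinuous function on a compact set attains its supremum. Uniqueness is then immediate from the strict extremality of part (\textbf{1}). To identify $\mu_{\infty}$ and obtain (\ref{ind}) I would run a Gibbs-type variational argument: perturb along an admissible direction $\mu_{\infty}+\varepsilon(\nu-\mu_{\infty})$, differentiate $H$ at $\varepsilon=0$, and deduce that stationarity forces the Radon--Nikodym density of $\mu_{\infty}$ against the intrinsic measure to be constant, so that $\mu_{\infty}$ is the normalized intrinsic R-measure. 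The factorization (\ref{ind}) would then be extracted from the Cartesian-product formula (\ref{Cart}): maximizing $H$ on $\mathbf{M}_{1}\otimes\mathbf{M}_{2}$ factorwise forces the optimal measure to split as a product, which is precisely the independence asserted.

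For part (\textbf{3}) I would invoke the fifth bullet of the R-measure summary: the null sets of any R-measure are exactly the discrete subsets of $\mathbf{M}$, so $\mu_{\infty}$ and any competing R-measure share their null sets and are mutually absolutely continuous; hence a density $d\mu/d\mu_{\infty}$ exists. The only subtlety is the phrase ``every probability measure,'' since a non-smooth measure (e.g.\ a point mass) is not absolutely continuous with respect to a smooth $\mu_{\infty}$; I would therefore either restrict the statement to measures in the R-class or supply the extra hypothesis that the discrete sets are $\mu$-null, which is exactly the smoothness condition.

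The main obstacle I anticipate is the characterization (\ref{ind}) in part (\textbf{2}), not existence or uniqueness, which are routine once the uniform concavity gap and upper semicontinuity are in hand. Read literally for \emph{all} measurable $A,B$, equation (\ref{ind}) with $A=B$ gives $\mu_{\infty}(A)=\mu_{\infty}(A)^{2}$, forcing $\mu_{\infty}$ to be $0$--$1$ valued, which is incompatible with the smoothness demanded of an R-measure. The identity must instead be understood as the factorization of the maximizer over a product (cylinder) decomposition supplied by (\ref{Cart}). Pinning down precisely the product structure relative to which (\ref{ind}) holds, and confirming that the entropy maximizer is genuinely the normalized intrinsic measure, is where the real work lies and is the step I would scrutinize most carefully.
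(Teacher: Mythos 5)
Your plan for part (\textbf{1}) founders on exactly the step you yourself flagged as load-bearing: strictness does not survive the $(card(\pi _{n}))^{-1}$ normalization. For a fixed finite partition $\pi $ and probability measures $\mu _{1}\neq \mu _{2}$, the concavity defect $\Lambda (\frac{1}{2}\mu _{1}+\frac{1}{2}\mu _{2},\pi )-\frac{1}{2}\Lambda (\mu _{1},\pi )-\frac{1}{2}\Lambda (\mu _{2},\pi )$ is bounded above by the entropy of the mixing weights, hence by $\log 2$, \emph{uniformly in} $\pi $. Since $card(\pi _{n})\rightarrow \infty $ along any genuinely refining sequence, dividing by $card(\pi _{n})$ annihilates the gap, and the limit functional $H$ is affine in $\mu $ --- the familiar fact that entropy rate is an affine function of the measure. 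So the uniform lower bound you hope to extract cannot exist, and uniqueness in part (\textbf{2}) cannot be obtained from strict extremality; your existence argument via weak-$*$ compactness and upper semicontinuity is fine in itself, but the uniqueness leg collapses. Note also that the paper's proof of part (\textbf{1}) is nothing but a restatement of (\ref{conv}), which asserts \emph{equality} $\Lambda (\alpha \mu _{1}+\beta \mu _{2},\pi )=\alpha \Lambda (\mu _{1},\pi )+\beta \Lambda (\mu _{2},\pi )$ while calling it ``strict convexity''; at the level of a fixed partition that equality is in fact only the inequality whose defect you exploit, and it is precisely the vanishing of that defect after normalization that makes $H$ affine rather than strictly concave. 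So neither your route nor the paper's establishes part (\textbf{1}) as stated.

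For parts (\textbf{2}) and (\textbf{3}) the paper takes a route entirely different from your variational one: it encodes a measure $\mu $ as the avoidance function $\mathbf{E}(A)=e^{-\mu (A)}$ of a Poisson point process on $\mathbf{M}$ (invoking Kurtz's theorem that a point process is determined by its avoidance function) and then simply cites Lemma 43 and Theorem 48 of [Mc2] for both conclusions; no Gibbs perturbation or product-factorization argument appears. Your two skeptical observations are nonetheless correct and sharper than the paper itself: read literally, (\ref{ind}) with $A=B$ forces $\mu _{\infty }(A)\in \{0,1\}$ (and with $A\cap B=\emptyset $ forces $\mu _{\infty }(A)\mu _{\infty }(B)=0$), which is incompatible with smoothness --- in the paper's intended point-process reading the independence is that of the random sets $A\cap \gamma $ and $B\cap \gamma $ for \emph{disjoint} $A,B$, a property of the Poisson process $\gamma $, not of $\mu _{\infty }$ itself, so your repair via the product formula (\ref{Cart}) is a reasonable but different reading from the paper's. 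Likewise your objection to part (\textbf{3}) is sound: a point mass is a probability measure not absolutely continuous with respect to any smooth $\mu _{\infty }$, so ``every probability measure'' must be restricted to the \textbf{R}-class, a defect the paper's citation does not cure. In short, your critical instincts about the statement are right, but the constructive core of your proposal (strict concavity surviving the limit) fails, and the paper's actual proof is a citation to point-process theory rather than any argument of the kind you outline.
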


\begin{proof}
(\textbf{1}) simply repeats (\ref{conv}).$\ $The proof of (\textbf{2})%
\textbf{\ }involves an excursion into the theory of point processes on a
space such as $\mathbf{M},$\ undertaken in the next section.\medskip
\end{proof}

\section{The Poisson Process on $\mathbf{M}$}

A\ \emph{point process}\ on $\mathbf{M}\ $is a random process\ of which the
generic random variable$\ \gamma \ $\linebreak is a discrete subset of $%
\mathbf{M;\ }$in the present case, that discreteness implies that a random
set is also a finite subset of$\ \mathbf{M.\ }$It is a theorem of Thomas
Kurtz$\ $[TK]\ that any point process is characterized by its \emph{%
avoidance function}%
\[
\mathbf{E(}A)=\Pr ob\{\gamma ~|\ A\cap \gamma =\emptyset \}=\Pr
ob\{card(A\cap \gamma )=0\}. 
\]

\emph{\ }$.\ $\noindent All that one needs to know here is that if $\mu \ $%
is a measure on $\mathbf{M\ }$then the function $\mathbf{E(}A)=e^{-\mu (A)}\ 
$is a valid avoidance function. It is the avoidance function of a point
process that is the counterpart on $\mathbf{M}\ $of the Poisson process on
the real line. This latter is the \emph{discrete homogeneous chaos }of
Norbert Wiener [NW].\ On any space\ that is locally compact and metrizable,
the Poisson process is that unique process for which, for each two
measurable sets $A,B,$ the random sets $A\cap \gamma ,\ B\cap \gamma ,$\ are
statistically independent whenever\ $A\cap B=\emptyset .\ $Conclusions (%
\textbf{2}) and (\textbf{3}) above are explicit in each of Lemma 43 and
Theorem 48 of [Mc2]~$\blacksquare $\smallskip

\section{Perelman's result, discussion \label{XX}}

Perelman shows that the Ricci flow carries a given compact $k$-manifold $%
\mathbf{M}\ $to a terminal $k$-manifold that has a maximum entropy. The
arguments above demonstrate a different way to make a similar association.
In each case the entropy involved is that of Shannon and of statistical
mechanics. The maxentropic manifold in each is a featureless manifold that
is topologically the $k$-dimensional "surface" of a $(k+1)-$dimensional ball%
\textbf{.} \medskip

Subsection \ref{SP} mentions convexity, a matter of no consequence to
Theorem \ref{fnl} above. There is a literally monstrous \textit{%
gedankenexperimente, }attributed to Einstein, showing that some simple
thermodynamic properties, combined with the property of convexity, make the
entropy $H(\cdot )\ $a unique functional.\ In [JvN],\ von Neumann describes
Einstein's argument,-- with some evidence of distaste,-- without, to my
reading, fully closing the issue of uniqueness. Perelman's Ricci flow
clinches that latter issue. Theorem \ref{fnl} above does also, but,\ as an
existence theorem,\ lacks the inevitable force of Perelman's constructive
Ricci flow\textbf{.}\smallskip

\subsection{Credits\ }

I thank Prof. Aubert Daigneault for corrections, suggestions, and patience%
\textbf{.} Thanks also are due Aaron F. McMillan for a review of modern
differential geometry.\smallskip

\subsection{Bibliography\protect\smallskip}

\noindent \lbrack CES]\ Shannon, Claude E. \textit{A mathematical theory of
communication}, Bell System Technical Journal, \textbf{v}.27, pp 379-423, pp
623-656, July-October 1948.

\noindent \lbrack GP] Grisha Perelman \textit{The entropy formula for the
Ricci flow and its geometric applications, }\textbf{Arxiv (}Math/\textbf{DG)}

\noindent \lbrack JvN] John von Neumann, \textit{Quantenmechanik,\ }Dover
NY, 1945. (reprinted from Springer, pre-1940.)

\noindent \lbrack Mc1] Brockway McMillan,\textit{\ The basic theorems of
information theory,} Annals of Mathematical Statistics, V 24 pp 196-219,
June 1953.

\noindent \lbrack Mc2] $\mathbf{\cdots }$, \textit{A taxonomy for random
sets, }International Journal of Pure and Applied Mathematics, \textbf{v 34},
No.3, \textit{pp }347-396.

\textbf{Addendum} (for the author's convenience during the submission
process.)

\ \ \ \ \ \ \ \ \ \ \ \ \ \ \ \ \ \ \ \ \ \ \ \ \ \ \ \ \ Grisha Perelman 
\TEXTsymbol{<}perelman@math.sunysb.edu\TEXTsymbol{>}

\end{document}